\theoremstyle{plain} 
\newtheorem{lemma}{Lemma}[section]
\newtheorem{proposition}[lemma]{Proposition}
\newtheorem*{mainproposition*}{Main Proposition}
\theoremstyle{definition}
\newtheorem*{hypotheses*}{Hypotheses for the Main Proposition}
\newtheorem*{question*}{Question}
\newtheorem{definition}[lemma]{Definition}
\newtheorem*{definition*}{Definition}
\newtheorem*{construction}{Construction}
\theoremstyle{remark} 
\newtheorem{remark}[lemma]{Remark} 
\newtheorem*{remark*}{Remark}
\newcommand{\id}{\operatorname{id}}
\newcommand{\End}{\operatorname{End}}
\newcommand{\R}{\mathbb{R}}
\newcommand{\C}{\mathbb{C}}
\newcommand{\E}{e}
\title{Cones in homotopy probability theory}
\author{Gabriel C. Drummond-Cole}
\address{Center for Geometry and Physics, 
Institute for Basic Science (IBS), Pohang 790-784, Republic of Korea}
\email{gabriel@ibs.re.kr}
\thanks{This work was supported by IBS-R003-G1}
\author{John Terilla}
\address{Department of Mathematics, The Graduate Center and Queens
  College, The City University of New
  York, USA}
\email{jterilla@gc.cuny.edu}
 \keywords{probability, cumulants, homotopy}
 \subjclass[2000]{55U35, 46L53, 60Axx}
\begin{document}

\begin{abstract}
This note defines cones in homotopy probability theory
and demonstrates that a cone over a space is a reasonable 
replacement for the space.  
The homotopy Gaussian distribution in one variable is revisited as a
cone on the ordinary Gaussian.
\end{abstract}

 \maketitle

\section{Introduction}
This note is concerned with a simple categorical aspect of homotopy
probability theory.  Ordinary probability theory is concerned with a
vector space of random variables together with a linear
functional called the expectation.  The space of random
variables is also endowed with an associative product.  The
expectation does not respect the associative product.  Rather, the
failure of the expectation to be an algebra map defines correlations
among the random variables.

Homotopy probability theory~\cite{HPT1, HPT2} is a theory in which the space of random
variables is a chain complex and the expectation map is a chain map to
the ground field.  In homotopy probability theory, the 
chain complex of random variables is also endowed with an associative
product.  Just as in ordinary probability theory, the expectation map
is not assumed to respect the associative product of random variables.
The differential is also not assumed to respect the product structure.   
Homotopy probability theory generalizes ordinary probability
theory---every ordinary probability space
is a homotopy probability space where the space of random variables is a
chain complex concentrated in degree zero with zero differential. 
The article \cite{HPT2} explains how to obtain meaningful, homotopy
invariant correlations among the random variables by adapting the failure of the
expectation to be an algebra map, involving the failure of the differential
to be a derivation.


In this paper, cones on homotopy
probability spaces are introduced.  A cone is a factorization of the expectation map into
an inclusion followed by a quasi-isomorphism, the result of which is
that all of the relations among
expectation values become encoded 
in a differential.   
A cone on an ordinary probability
space is a homotopy probability space that serves as a good replacement for the original
probability space and in the cone, homological methods can be used to compute expectation
values and correlations of the original probability space.  In a variation where the expectation is
factored as an inclusion that is an algebra map followed by a
quasi-isomorphism, 
computations in the cone are more simply related to
those in the original probability space and can be easier to carry
out.  One source of these \emph{algebraic} cones is when the relations
among expectations arise from a group acting on the random variables.

\thanks{The authors would like to thank 
Jeehoon Park and Jae-Suk Park for explaining 
their paper \cite{PP} and to thank 
Owen Gwilliam and Dennis Sullivan for other helpful conversations.
}

\section{Preliminaries}
In ordinary probability theory where random variables are measurable
functions on a measure space and expectation is integration, one has the special random variable $1$
which serves a multiplicative identity for the product of random
variables.   The only compatibility between the algebra structure in
the space of random variables and the expectation is the normalization
that expectation value of the random variable $1$ equals the complex
number $1$.  In order to capture this normalization in homotopy probability
theory, pointed chain complexes
will be used.  It is worth noting that this normalization effectively
``cancels'' the trancendental unknowns involved in integration.

\begin{definition}

A \emph{pointed chain complex} is a chain complex $(V,d)$ together
with a map $\upsilon:\C \to V$ called the \emph{unit}.  
A morphism of pointed chain complexes is a
chain map commuting with the units.  The map $\upsilon$ can be
conveniently identified with the element $\upsilon(1)\in V$.
\end{definition}

The field of complex numbers $\C$ is considered as a chain complex concentrated in
degree zero with zero differential.  
  The identity $\id:\C \to \C$ makes $\C$ a pointed
chain complex.  

Definitions~\ref{def:probspace}~through~\ref{def:joint_cumulant} are taken with slight modification
from~\cite{HPT2}. The interested reader can find more details there.

\begin{definition}
\label{def:probspace}
A \emph{unital commutative homotopy probability space} is a pointed chain complex
$V$ together with the following data:
\begin{itemize}
\item A graded commutative, associative product on $V$.  The unit
  $\upsilon:\C \to V$ of the chain complex is the identity for the product.  No compatibility is assumed between the product
  and the differential.
\item A map of pointed chain complexes $\E:V\to \C$ called
  \emph{the expectation}.   
\end{itemize}
A morphism of unital commutative homotopy probability spaces is simply
a morphism of pointed chain complexes commuting with expectation.  No
compatibility is assumed between the morphism and the products.
A unital commutative homotopy probability space will be
called a \emph{probability space} for short.  If the underlying pointed
chain complex is concentrated in degree zero and has zero
differential, we will call it an \emph{ordinary probability space}.  
\end{definition}

Note that $\C$ becomes an ordinary probability space by defining the expectation
to be the identity map.  The assumption that expectation is a map of
pointed chain complexes means that $\E\upsilon=\id_\C$.  Therefore, both the
unit of a probability space $\upsilon:\C \to V$ and the expectation 
$\E:V \to \C$ are morphisms of probability
spaces and the unital conditions on morphisms imply that any map between $\C$ and a probability space must be one of these two.  Therefore, $\C$ is both initial and terminal in the category
of probability spaces.

In order to define correlations among random variables in a
probability space, the failures of (1) the expectation to be an algebra
map and (2) the 
differential to be a derivation must be taken into account.  A convenient way to organize the failure
to be a derivation is with the language of $L_\infty$ algebras.
Most concepts and calculations for an $L_\infty$ algebra $V$ can be
expressed  in terms of the symmetric (co)algebra of $V$.

\begin{definition}
Let $V$ be a graded vector space.  
We denote the cofree
nilpotent commutative coalgebra on $V$ by $SV$; it is linearly spanned by
symmetric powers of $V$.  

Given a symmetric associative product on $V$, 
define a map $\varphi$ from $SV$ to $V$ as follows:
\[
a_1\odot\cdots\odot a_n\mapsto a_1\cdots a_n.\]

The map $\varphi$ uniquely extends to be a coalgebra automorphism
$SV\to SV$.  By
abuse of notation, this coalgebra automorphism will also be denoted
$\varphi$.  In \cite{DS,RS}, $\varphi$ is called
\emph{the cumulant map}.
\end{definition}
Most concepts and calculations for an $L_\infty$ algebra $V$ can be
transported by $\varphi$.
\begin{definition}
Let $V$ be a graded vector space with an associative product. Let $f$ be a $\C$-linear map $SV\to
SV$. We call the compostion $\varphi^{-1}f\varphi$ the map $f$ {\em
  transported by $\varphi$}, and denote it $f^\varphi$.   If $V$ and $W$
are two graded vector spaces with associative products, then a
$\C$-linear map $SV\to SW$ can also be transported.  In this case,
$f^\varphi=\varphi^{-1}f \varphi$ where the $\varphi$ on the right is a coalgebra
automorphism $SV
\to SV$ and the $\varphi^{-1}$ on the left is the inverse of the coalgebra automorphism
$\varphi:SW \to SW$.  
\end{definition}\label{def:transported_structure}
Let $V$ be a probability space.   The differential $d$ on $V$ can
be extended to a square zero coderivation on $SV$.  This makes the
probability space into an $L_\infty$ algebra.  Transporting this
$L_\infty$ structure by $\varphi$ defines another $L_\infty$ structure
$d^\varphi$ on $V$
called {\em the transported structure}.

\begin{definition}
Let $V$ be a probability space. 
A {\em collection of $n$ homotopy random variables} is  an  $L_\infty$
morphism from $(\C^{\times n},0)$ to $(V,d^\varphi)$.  That is, it is a
degree zero map 
$X:S\C^{\times n}\to SV$ satisfying $d^\varphi X=0$.
\end{definition}
\begin{remark}
In \cite{HPT2}, a collection of homotopy random variables was defined
as the homotopy class of such a morphism, rather than a single
morphism. 
This definition is less obfuscatory.
\end{remark}
The expectation $\E$ can be viewed as an $L_\infty$ morphism from
$(V,d)$ to $(\C,0)$. This map can be transported by $\varphi$.
\begin{definition}
The {\em total cumulant} $K$ of a probability space is the
expectation map transported by $\varphi$: 
 \[K:= e^\varphi\]
\end{definition}

To summarize: the expectation is a map $\E:V \to \C$ satisfying $\E d=0$.
Transporting the expectation map by $\varphi$ results in the total
cumulant, which is a coalgebra map
$K:SV \to S\C$ satisfying $Kd^\varphi=0$.  A 
coalgebra map $K:SV \to S\C$ 
is completely determined by its components, which are
multilinear maps $\{k_n:V^{\times n} \to \C\}$.  
If $V$ is an ordinary probability space, then these multilinear maps
$\{k_n\}$ coincide precisely with the classical cumulants
in ordinary probability theory \cite{NSp}.  

\begin{definition}\label{def:joint_cumulant}
The {\em joint cumulant} of a collection of $n$ homotopy random variables
denotes the composition of the total cumulant map with the collection of
$n$ homotopy random variables.
\[(\C^{\times n},0) \stackrel{X}{\longrightarrow} (V,d) \stackrel{K}{\longrightarrow} (\C,0)\]
\end{definition}

Recall that two $L_\infty$ morphisms $X,Y:SV\to SW$ are homotopic if
there is an $L_\infty$ morphism $H:SV \to SW\otimes \C[t,dt]$ with
$H(0)=X$ and $H(1)=Y$.  
\begin{proposition}\label{prop1}
If two collections of
$n$ homotopy random variables are homotopic then they have identical
joint cumulants.
\end{proposition}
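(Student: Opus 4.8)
The plan is to exploit the fact that the total cumulant $K$ is a strict $L_\infty$ morphism whose target $S\C$ carries the zero differential. Recall from the summary that $K\colon (SV,d^\varphi)\to (S\C,0)$ is a coalgebra map satisfying $Kd^\varphi=0$; that is, $K$ is a chain map of coalgebras into a complex with vanishing differential. A collection $X$ of $n$ homotopy random variables satisfies $d^\varphi X=0$, and its joint cumulant is the composite coalgebra map $KX\colon S\C^{\times n}\to S\C$. So the proposition amounts to showing that the assignment $X\mapsto KX$ cannot distinguish homotopic collections.

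First I would post-compose the homotopy with the cumulant. Let $X,Y$ be homotopic, witnessed by an $L_\infty$ morphism $H\colon S\C^{\times n}\to SV\otimes\C[t,dt]$ with $H(0)=X$ and $H(1)=Y$. Since $K$ is a strict coalgebra chain map, the extension $K\otimes\id\colon SV\otimes\C[t,dt]\to S\C\otimes\C[t,dt]$ is again an $L_\infty$ morphism: it is a coalgebra map, and it intertwines the differentials because $K$ intertwines $d^\varphi$ with $0$ while $\id$ is the identity on the $\C[t,dt]$ factor. Hence $\tilde H:=(K\otimes\id)H$ is an $L_\infty$ morphism $S\C^{\times n}\to S\C\otimes\C[t,dt]$ with $\tilde H(0)=KX$ and $\tilde H(1)=KY$; in other words, $\tilde H$ exhibits the two joint cumulants as homotopic.

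The key step is then to observe that a homotopy into $(S\C,0)$ is necessarily constant. Because both the source $S\C^{\times n}$ and the coalgebra $S\C$ carry the zero differential, the differential on $S\C\otimes\C[t,dt]$ is just $\id\otimes d_{dt}$, and the chain-map condition for $\tilde H$ reduces to $(\id\otimes d_{dt})\tilde H=0$. Decomposing $\tilde H=A(t)+B(t)\,dt$, where $A(t),B(t)\colon S\C^{\times n}\to S\C$ depend polynomially on $t$, this condition says precisely that $\tfrac{d}{dt}A(t)=0$, so $A$ is independent of $t$. Evaluating at the endpoints gives $KX=A(0)=A(1)=KY$, which is the desired equality of joint cumulants.

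I expect the only real subtlety to be the bookkeeping in the second paragraph: checking that $K\otimes\id$ is genuinely an $L_\infty$ morphism and that post-composition commutes with the endpoint evaluations $H\mapsto H(0),H(1)$. Both are formal once one knows that $K$ is a strict coalgebra map intertwining $d^\varphi$ with the zero differential, which is exactly how the total cumulant was constructed. The conceptual content --- that pushing a homotopy into a complex with zero differential trivializes its $t$-dependence --- is then immediate.
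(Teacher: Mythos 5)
Your argument is correct and is essentially the proof the paper has in mind: the paper's own ``proof'' simply defers to Lemma 3 of \cite{HPT2}, and the standard argument there is exactly yours --- post-compose the homotopy with the total cumulant $K$, which intertwines $d^\varphi$ with the zero differential on $S\C$, and note that a homotopy valued in a complex with zero differential satisfies $\tfrac{d}{dt}A(t)=0$, so its endpoints agree. The only cosmetic caveat is that $SV\otimes\C[t,dt]$ is a path object rather than literally a symmetric coalgebra on a vector space, but $(K\otimes\id)H$ is a homotopy in precisely the sense the paper uses, so there is no gap.
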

\begin{proof}See the proof of Lemma 3 in \cite{HPT2}.
\end{proof}

\section{Cones and algebraic cones}
\subsection{Contractible probability spaces}
Here, we identify a condition under which the converse of
Proposition \ref{prop1} is true.

\begin{definition}
A probability space is called {\em contractible} if the
expectation map is a quasi-isomorphism.
\end{definition}

\begin{proposition}\label{prop_contractible}
Let $V$ be a contractible probability space. Two collections
of $n$ homotopy random variables in $V$ are equal if and only if their
cumulants are
equal. 
\end{proposition}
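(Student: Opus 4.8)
The plan is to prove the nontrivial implication: if the joint cumulants of two collections $X$ and $Y$ agree, then $X=Y$; the reverse implication is immediate, since equal morphisms have equal composites with the total cumulant $K$. The strategy rests on the observation that contractibility makes $K$ an $L_\infty$ quasi-isomorphism, after which the agreement of cumulants can be propagated back, component by component, to an agreement of the collections themselves.

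First I would record the linear part of the total cumulant. Because the cumulant map satisfies $\varphi=\id+(\text{terms of higher symmetric order})$ on both $SV$ and $S\C$, transporting leaves the linear component undisturbed, so the first Taylor coefficient of $K=\E^\varphi$ is exactly $\E$. Contractibility says precisely that $\E$ is a quasi-isomorphism, so $K\colon(V,d^\varphi)\to(\C,0)$ is an $L_\infty$ quasi-isomorphism. I would also fix the splitting coming from the unit and the expectation: since $\E\upsilon=\id$ and $\E$ is a chain map, $V=\upsilon(\C)\oplus\ker\E$ as chain complexes, $\ker\E$ is acyclic, and I would choose a contracting homotopy $h$ on $\ker\E$.

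Next I would transport the data of a collection through $K$. Writing a collection through its Taylor components $X_m\colon S^m(\C^{\times n})\to V$, the $m$-th joint cumulant equals $\E X_m$ plus an expression in the components $X_i$ with $i<m$ (the higher cumulant coefficients $k_j$ for $j\ge 2$ can contribute only through lower components, since their arguments have positive order summing to $m$). Inductively, equality of all joint cumulants forces $\E(X_m-Y_m)=0$, that is $X_m-Y_m\in\ker\E$; and the $L_\infty$-morphism equation, which reads $d\,X_m=-G_m(X_{<m})$ with right-hand side depending only on lower components, forces $X_m-Y_m$ to be a $d$-cycle. Acyclicity of $\ker\E$ then exhibits $X_m-Y_m$ as a boundary, and I would use $h$ to realize this boundary explicitly.

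The main obstacle is exactly this last point: the cumulant together with the structure equations pins down each component only up to an exact term in $\ker\E$, so the induction as stated produces a homotopy between $X$ and $Y$ rather than outright equality—which is consistent with reading the statement as the converse of Proposition~\ref{prop1}. To upgrade homotopy to genuine equality I would use the contracting homotopy $h$ to normalize: $h$ singles out a canonical lift of any cumulant to a collection, namely the unique solution of the structure equations lying in the gauge $hX=0$, and the real content is that in a contractible space every collection already agrees on the nose with the canonical lift of its cumulant. Establishing this rigidity—equivalently, that the homotopy built from $h$ is the identity on these particular morphisms—is where contractibility must be used in full force, and it is the step I expect to demand the most care; the preceding reductions are formal.
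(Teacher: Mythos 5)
There is a genuine gap, and it sits exactly at the step you flag as demanding the most care: the ``rigidity'' you hope to establish is not merely delicate, it is false. In a contractible probability space two distinct collections can have identical cumulants. The paper's own closing remark exhibits this in the homotopy Gaussian with $X(1)=x$ and $Y(1)=-x$, and Section 5 is devoted to constructing an explicit \emph{homotopy} between collections with equal cumulants --- an exercise that would be vacuous if such collections were literally equal. Your own induction shows where the obstruction lives: equality of cumulants together with the structure equations pins down $X_m-Y_m$ only as an exact cycle in $\ker\E$, and an exact cycle need not vanish. No contracting homotopy $h$ can rescue this, because the gauge condition $hX=0$ selects one representative per homotopy class and a general collection simply need not lie in that gauge; there is no mechanism forcing it to. The conclusion ``equal'' is therefore unattainable, and the proposition must be read --- as its role as a converse to Proposition~\ref{prop1} and the paper's own argument make clear --- with ``homotopic'' in place of ``equal'' for the collections (equality is fine on the cumulant side, since homotopic $L_\infty$ morphisms between complexes with zero differential coincide).

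With that correction, the part of your argument you carry out before the failed normalization is a legitimate, hands-on route to the correct statement: your identification of the linear part of $K$ as $\E$, the splitting $V=\upsilon(\C)\oplus\ker\E$ with $\ker\E$ acyclic, and the inductive production of a homotopy from exactness amount to an explicit proof of the general fact the paper invokes wholesale, namely that an $L_\infty$ quasi-isomorphism induces a bijection on homotopy classes of $L_\infty$ morphisms out of a fixed source. The paper's proof is precisely that soft two-line argument ($K$ is the transport of the quasi-isomorphism $\E$; compose with $K$; use that the target has zero differential), whereas yours would supply the obstruction-theoretic details. Delete the final ``upgrade to equality'' step and state the conclusion as a homotopy, and your proof goes through.
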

\begin{proof}
Since $V$ is contractible, the total cumulant is a quasi-isomorphism
(since it is the transport of a quasi-isomorphism).
Since $K$ is a quasi-isomorphism, two collections of homotopy random variables are
homotopic if and only if their compositions with $K$ are homotopic.
Since $\C$ has no differential, this is true
if and only if their joint cumulants are equal.
\end{proof}

\begin{definition}Let $V$ be a probability space.  
  A \emph{cone on $V$} is a factorization $V\to CV\rightarrow\C$ of
  the expectation $V \to \C$ for
  which $CV$ is contractible and $V\to CV$ is an injective map of probability spaces.
\end{definition}

One can always factor a chain map $U \to V$ as an
inclusion followed by a surjective quasi-isomorphism
$U \rightarrowtail V \overset{\sim}{\twoheadrightarrow} W.$
In particular, the expectation of a probability space $e:V \to \C$ can 
be factored in this way, so cones on a probability space always exist.

\subsection{Algebra preserving morphisms}
While morphisms of probability spaces preserve expectation values,
they must be transported in order to relate joint cumulants.   
 
Let $\alpha:V \to
W$  be a morphism of probability spaces.   
One can relate the joint cumulants by a transported $\alpha$ as follows.
Suppose $X:(\C^{\times n},0)\to (V,d^\varphi)$ is a
collection of homotopy random variables and $K_V:(V,d^\varphi)\to
(\C,0)$ and $K_W:(W,d^\varphi)\to
(\C,0)$ are the total cumulants of $V$ and $W$ respectively, then the
joint cumulants satisfy $K_VX=K_W\alpha^\varphi X.$  The
following diagram illustrates the relationship between $\alpha$ and
the joint cumulants.  

  \begin{center}
 \begin{tikzcd}
 \C^{\times n}\ar{r}{X}
 &V
 \ar[bend left = 40]{rr}{K_V}
 \ar{d}{\varphi}
 \ar{r}[swap]{\alpha^\varphi}
 &W
 \ar{r}[swap]{K_W}
 \ar{d}{\varphi}
 &\C\ar{d}{\varphi}
 \\
 &V
 \ar{r}{\alpha}
 \ar[bend right = 40]{rr}[swap]{\E_V}
 &W
 \ar{r}{\E_W}
 &\C
 \end{tikzcd}
 \end{center}


\begin{proposition}\label{prop_algebramap}
Morphisms of probability spaces which preserve the algebra structure
  preserve total cumulants.
\end{proposition}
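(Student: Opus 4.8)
The plan is to prove that, for an algebra-preserving morphism $\alpha\colon V\to W$, the transported map $\alpha^\varphi$ coincides with the unextended coalgebra map $S\alpha\colon SV\to SW$. Granting this, the conclusion is immediate. The diagram above commutes for \emph{every} morphism of probability spaces (since $\E_V=\E_W\alpha$ and transport is functorial, $\varphi^{-1}(gf)\varphi=(\varphi^{-1}g\varphi)(\varphi^{-1}f\varphi)$), which gives the identity $K_V=K_W\alpha^\varphi$. Replacing $\alpha^\varphi$ by $S\alpha$ then shows that $\alpha$ itself, with no transport, carries the total cumulant of $V$ to that of $W$; that is, $K_V=K_W\,S\alpha$, which is the assertion that $\alpha$ preserves total cumulants.

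So the real content is the identity $\alpha^\varphi=S\alpha$, equivalently $\varphi_W\,S\alpha=S\alpha\,\varphi_V$, where $\varphi_V$ and $\varphi_W$ denote the cumulant automorphisms of $SV$ and $SW$. I would prove this by exploiting cofreeness: a morphism of coalgebras into $SW$ is determined by its corestriction, the composite with the projection $\pi_W\colon SW\to W$. Both $\varphi_W\,S\alpha$ and $S\alpha\,\varphi_V$ are coalgebra maps $SV\to SW$, so it suffices to check that they agree after composing with $\pi_W$. The two corestrictions then unwind to explicit multilinear maps: since $\pi_W\varphi_W$ is by construction the multiplication $w_1\odot\cdots\odot w_n\mapsto w_1\cdots w_n$ on $W$, the composite $\pi_W\varphi_W\,S\alpha$ sends $v_1\odot\cdots\odot v_n$ to $\alpha(v_1)\cdots\alpha(v_n)$; while $\pi_W S\alpha=\alpha\,\pi_V$, so $\pi_W S\alpha\,\varphi_V$ first multiplies in $V$ and then applies $\alpha$, sending $v_1\odot\cdots\odot v_n$ to $\alpha(v_1\cdots v_n)$. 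These agree precisely because $\alpha$ preserves the product, and this is the one and only place where the algebra-preserving hypothesis enters.

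The only genuinely delicate point is the appeal to the universal property: one must confirm that both composites really are morphisms of coalgebras, so that equality of corestrictions forces equality of the full maps, and that cofreeness of $SW$ is invoked in the correct unextended sense rather than for the transported structure. Beyond that the argument is a one-line computation, and no homotopical input is needed, since the preservation of total cumulants is a statement purely about the underlying coalgebra maps.
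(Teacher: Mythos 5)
Your proposal is correct and follows essentially the same route as the paper: the identity $K_V=K_W\alpha^\varphi$ holds for every morphism, and the algebra-preserving hypothesis enters only to show that $\alpha^\varphi$ coincides with the untransported coalgebra extension of $\alpha$. The paper simply asserts that last step, whereas you justify it via cofreeness and corestrictions; that is a welcome expansion of the same argument, not a different one.
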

\begin{proof}Let $\alpha:V\to W$ be a morphism of probability spaces.
  Then $K_W \alpha^\phi =K_V$.  If the map $\alpha$ preserves the algebra
  structure then $\alpha^\phi=\alpha$.  So, $K_W \alpha =K_V$.
\end{proof}


\begin{definition}Let $V\rightarrow\C$ be a probability space.  An
  \emph{algebraic cone on $V$} is a cone $V\to CV\rightarrow\C$ on $V$
  for which $V\to CV$ is an algebra map.
\end{definition}

\begin{lemma}\label{lemma:coneexists}
  There exists an algebraic cone on any probability space.
\end{lemma}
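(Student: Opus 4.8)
The plan is to build $CV$ by adjoining a contracting homotopy for the kernel of the expectation, taking advantage of a feature special to this setting: since the differential of a probability space is not required to be a derivation, the multiplicative structure and the differential of $CV$ may be chosen essentially independently. Thus it suffices to produce, first, a commutative unital algebra $B$ containing $V$ as a subalgebra, and second, a differential $D$ and a linear expectation $E\colon B\to\C$ with $D|_V=d$ and $E|_V=e$ making $E$ a quasi-isomorphism. The inclusion is then automatically a morphism of probability spaces preserving the product, and the factorization $V\to B\to\C$ recovers $e$.

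First I would fix the underlying chain complex. Writing $\bar V=\ker e$, which is a subcomplex because $e$ is a chain map, I set the underlying graded space of $CV$ to be $V\oplus s\bar V$, a copy $s\bar V$ of $\bar V$ shifted up one degree, with $D(v)=dv$ and $D(sx)=x-s(dx)$ for $x\in\bar V$. This is the mapping cone of the inclusion $\bar V\hookrightarrow V$, so $D^2=0$, and since $V=\bar V\oplus\C\upsilon$ as complexes its homology is exactly $\C\cdot\upsilon$. The augmentation $E(v+sx)=e(v)$ is a pointed chain map, because $E(\upsilon)=1$ and $ED=0$ (the expectation kills both $dv$ and $x$); it restricts to $e$ on $V$ and is a quasi-isomorphism. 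Thus the complex and the expectation are settled, and $V$ is visibly a subcomplex.

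The remaining, and genuinely delicate, task is to promote $V\oplus s\bar V$ to a commutative algebra in which $V$ is a subalgebra and the inclusion is an algebra map. The natural attempt is to make $s\bar V$ a square-zero ideal, which reduces the problem to equipping $s\bar V$ with a unital $V$-module structure. Here lies the main obstacle: $\bar V$ is only a subcomplex and not an ideal, precisely because $e$ fails to be multiplicative, the same failure that makes the cumulants nontrivial, so $s\bar V$ carries no module structure inherited from the product on $V$, and the obvious guess $v\cdot sx=e(v)\,sx$ fails to be associative for the same reason. When $V$ admits a character $\chi\colon V\to\C$, an honest algebra map (as for the algebras of random variables arising in practice, and for every finite-dimensional algebra), setting $v\cdot sx=\chi(v)\,sx$ gives an associative unital action and completes the construction: $B=V\oplus s\bar V$ is then a commutative unital algebra and $V\hookrightarrow B$ is an injective algebra map.

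For a completely general $V$ one cannot rely on a character, and I would instead embed $s\bar V$ into a free $V$-module, so that the square-zero extension $B=V\ltimes M$ is automatically an algebra, and transport the contracting differential across the enlargement, which is legitimate exactly because $D$ need not obey the Leibniz rule. The price is that a free module carries surplus homology, so one must adjoin further generators to cancel it, in the manner of a Koszul--Tate resolution, the differential again being prescribed directly on generators rather than by Leibniz. I expect the main technical obstacle in this general case to be the bookkeeping that guarantees the iterated adjunction leaves the homology equal to exactly $\C\cdot\upsilon$ rather than introducing new classes, so that $E$ remains a quasi-isomorphism. Once $B$, $D$ and $E$ are in hand, the verification that $V\to B\to\C$ is an algebraic cone is routine, again because no compatibility between $D$ and the product needs to be checked.
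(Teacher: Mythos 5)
Your proposal takes a genuinely different route from the paper's. The paper is far more economical: it fixes a linear decomposition $V=H\oplus B\oplus\hat{B}$ with $H$ a space of homology representatives containing $1$, $B=\operatorname{im}d$, and $d:\hat{B}\to B$ an isomorphism, sets $K=\ker(e|_H)$, and adjoins only the shifted homology-level kernel: $CV=V\oplus K[1]$, with differential the degree shift $K[1]\to K$ and with the product of $K[1]$ against everything except the unit declared to be zero. You instead cone off the \emph{entire} kernel $\ker e$ and are then forced to confront the question of a unital $V$-module structure on the adjoined piece; this is what drags in the character hypothesis and, in the general case, the free-module machinery. What the paper's smaller adjunction buys is that the new generators are indexed only by the homology classes that must die, not by all of $\ker e$.

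That said, the concern you isolate --- that a square-zero extension needs an \emph{associative} action, and that acting through a linear functional that is not multiplicative destroys associativity --- is exactly the right one, and it is worth noting that the paper's construction does not obviously escape it either. In $CV=V\oplus K[1]$, unitality forces $1\cdot s\kappa=s\kappa$, so $V$ acts on $K[1]$ through the linear projection $\pi:V\to\C\cdot 1$ determined by the chosen decomposition, and associativity requires $\pi(vw)=\pi(v)\pi(w)$. For the ordinary Gaussian ($V=\C[x]$, $d=0$, so that $\pi=e$) one gets $x\cdot(x\cdot s\kappa)=0$ since $e(x)=0$, while $(x\cdot x)\cdot s\kappa=e(x^2)\,s\kappa=s\kappa$. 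So some multiplicativity input is genuinely needed by any ``act through a functional'' construction; your extra care here is not wasted motion.

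The genuine gap is in your general case. A character need not exist --- even for ordinary probability spaces, e.g.\ $V=\C(t)$ with any normalized functional --- and there you offer only a sketch, explicitly deferring the ``bookkeeping'' that is the actual content. The sketch can be completed, and more easily than you fear: take $V^+=\ker e$ as the complement of $\C\cdot 1$, set $A_1=(\ker e)[1]$ and inductively $A_{n+1}=(V^+\otimes A_n)[1]$, let $M=\bigoplus_{n\geq 1}V\otimes A_n$ (a free $V$-module), and let $CV=V\oplus M$ with $M$ a square-zero ideal. Define $D$ to be $d$ on $V$, the cone formula $s\bar{w}\mapsto\bar{w}-1\otimes s(d\bar{w})$ on $1\otimes A_1$, the shift $su\mapsto u$ on $1\otimes A_{n+1}$, and zero on each $V^+\otimes A_n$; extend $e$ by zero on $M$. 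Then $CV$ splits as the direct sum of the subcomplexes $\C\cdot 1$, $\;\ker e\oplus(1\otimes A_1)$, and $(V^+\otimes A_n)\oplus(1\otimes A_{n+1})$ for $n\geq 1$, the latter all acyclic, so $H(CV)=\C\cdot 1$ and $E$ is a quasi-isomorphism; no Leibniz compatibility is required at any point, exactly as you say. As submitted, however, your proposal proves the lemma only under the character hypothesis, so it stops short of the full statement.
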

\begin{proof}
  Decompose $V$ linearly into $H\oplus B\oplus \hat{B}$, where $H$ is
  a space of homology representatives including $1$, $B$ is the image
  of $d$, and $\hat{B}$ is a space of coimages of $d$ so that $d$ is
  an isomorphism from $\hat{B}$ to $B$. Let $K$ be the kernel of the
  expectation restricted to $H$; then $H$ is linearly spanned by $K$
  and $1$. Define $CV$ to be $V\oplus K[1]$ where $K[1]$ is a shifted
  copy of $K$. Extend $d$ by sending $K[1]$ to $K$ by the degree
  shift. Let the product of $K[1]$ with anything in $K[1]\oplus
  B\oplus \hat{B}\oplus K$ to be zero. Define the expectation of
  $K[1]$ to be zero. The evident inclusion from $V$ to $CV$ is a
  morphism of probability spaces which respects the product structure.
\end{proof}

\begin{remark}\label{classicalremark}
The fact that there exists an algebraic cone on any probability space 
allows an ordinary probability space to be replaced by homotopy
probability space with nice properties.  If $V\to \C$ is an ordinary
probability space, then for any collection of elements $x_1, \ldots,
x_n\in V$, the map $X:\C^n \to V$ defined by $e_i\mapsto x_i$ where $e_i$
is the $i$-th standard basis vector of $\C^n$ defines a collection of
homotopy random variables.   The cumulant of $X$ is an $L_\infty$
morphism $KX:\C^n \to \C$, whose value  on $e_{i_1}\odot
\cdots \odot e_{i_k}\in S^k\C^n$ precisely equals the classically defined cumulant 
\[KX_k \left (e_{i_1}\odot
\cdots \odot e_{i_k}\right)=c_k\left(x_{i_1}, \ldots, x_{i_n}\right).\]
Now let $V\to CV \to \C$ be an algebraic cone on $V$.
Because the first map is an algebra map, Proposition
\ref{prop_algebramap} applies and the cumulant of $X$ in $V$ equals
the cumulant of $\tilde{X}:\C^n \to V \to CV$.  Thus, the classical
cumulants can be computed within the cone $CV$.  Proposition \ref{prop_contractible}
applies to the second map of $V\to CV \to \C$ so the only
 homotopy random variables that have the same
cumulants are in fact homotopic in $CV$---all the possible
relations among the expectations of random variables in $V$ have been
encoded in the differential in $CV$.  Moreover, for simple collections
of homotopy random variables like $\tilde{X}$, whose components $S^k
\C^n \to CV$ are zero for $k>1$, many homotopy algebra 
computatations can be reduced to
simpler homology calculations in $CV$.
\end{remark}

\begin{remark}
The construction of an algebraic cone in the proof of 
Lemma \ref{lemma:coneexists}, which says an algebraic cone exists for any
probability space,
should be thought of an existence argument rather than a
construction.  The proof uses the kernel of the expectation but in
practice, one may not know much about this kernel and therefore may not have a description
of the algebraic cone constructed in the proof of Lemma
\ref{lemma:coneexists} that is explicit enough for calculations.  
Instead, one might find an algebraic cone by other means, say via
a relevant group action as described in the next section, and then the homotopy-algebra tools
available may be used to carry out calculations that otherwise involve
transcendental methods.
\end{remark}


\section{Group actions}
In Section 2 of \cite{PP}, there is a construction of a homotopy
probability space from a group acting on an ordinary probability space with a group-invariant
expectation.  The relations among the expectation values that arise
from the group action become encoded in the differential. 
In the special case
that \emph{all} relations among expecation values arise from the group
action, and assuming an additional vanishing condition,
this construction yields an algebraic cone.  

Suppose that $G$ is a 
Lie group acting on an ordinary probability space $V$ and that
the expectation map $e$ is $G$-equivariant.  That is, $e(gx)=e(x)$ for
all $g\in G$.  
In good cases (say $G$ is compact and simply
connected) the associated Lie
algebra action $\mathfrak{g}\to \End(V)$ determines the action of
$G$.  If $G$ captures all relations among expectations: $e(x)=e(y)$ if and only if 
 $y=gx$ for some $g\in G$, then all relations among expectations will
 be encoded in the $\mathfrak{g}$ action which satisfies $e(\lambda  x)=0$ for all 
$\lambda \in \mathfrak{g}$.

In this situation, $C(\mathfrak{g},V)$, the Chevalley-Eilenberg
cochain complex with values in the module $V$, produces a homotopy
probability space that with a suitable vanishing condition on higher
cohomology is an algebraic cone.  This cochain complex is
a non-positively graded complex \[\cdots \to \mathfrak{g}^*\otimes V \to
V \to 0\]with $V$ in degree zero. Further, the degree zero cohomology
is precisely the co-invariants $V/V_\mathfrak{g}$---an element
of $V$ is in the image of the differential if and only if it has the
form $\lambda x$ for some $\lambda\in \mathfrak{g}$.
The complex $C(\mathfrak{g},V)$ and
the differential can be described explictly as follows.
Let $\{\lambda_1,
\ldots, \lambda_n\}$ be
a basis for $\mathfrak{g}$ and let $\rho_i:V \to V$ denote the action of
$\lambda_i$ on $V$.
Let $\mathfrak{g}[-1]$ denote $\mathfrak{g}$ with its degree shifted by
$1$, so that an element $\lambda \in \mathfrak{g}[-1]$ has degree $1$
and an element $\eta\in (\mathfrak{g}[-1])^*$
has degree $-1$.
$$C(\mathfrak{g},V)=\hom(S \left(\mathfrak{g}[-1]\right),V)\simeq S
(\mathfrak{g}[-1])^*\otimes V.$$
Define a differential $d:CV \to CV$ by 
\begin{equation}\label{CEdifferential}
d=\sum_{i=1}^n \frac{\partial }{\partial \eta_i} \otimes \rho_i +
\sum_{i<j=1}^n \sum_{k=1}^n f_{ij}^k \eta_k \frac{\partial^2}{\partial \eta_i \partial
  \eta_j}\otimes 1
\end{equation}
where $\{\eta_1, \ldots, \eta_n\}$ is the basis for $\mathfrak{g}^*[1]$
dual to $\{\lambda_1, \ldots, \lambda_n\}$ and the $f_{ij}^k$ are the
structure constants $[\lambda_i,
\lambda_j]=\sum_{k=1}^n f_{ij}^k \lambda_k$.  

Since $H^0(CV,d)=V/V_\mathfrak{g}$ and the expectation is invariant, extending the
expectation $\E:CV \to \C$ to be zero on $S
(\mathfrak{g}[-1])^*$ is a chain map.  Moreover, since it is assumed
that $\E(x)=0$ if and only if $x\in V_\mathfrak{g}$, the expectation
$CV\to \C$ induces an isomorphism in $H^0$.
If, in addition, the cohomology $H^i(CV,d)$ vanishes in all other
degrees, the expectation $CV\to \C$ is a quasi-isomorphism.  This makes $V\to CV\to \C$
a cone on $V$, in fact an algebraic cone since $CV$ is a free
extension of $V$.

\section{An explicit homotopy in the Gaussian example}
In~\cite{HPT2}, the following cone on the ordinary probability space
for the one variable Gaussian was constructed.  See also section 3.2
of \cite{PP}, section 2.3
of \cite{G} and references therein.
\begin{definition}
Let $V=\C[x,\eta]$ where $x$ is in degree zero and $\eta$ is in degree $-1$ (so in particular, $\eta^2=0$). Define the expectation as 
\[
\E(p(x)+q(x)\eta)=\frac{\int_{-\infty}^\infty p(x)e^{\frac{-x^2}{2}}}{\int_{-\infty}^\infty e^{\frac{-x^2}{2}}}
\]
and the differential as
\[
d(p(x)+q(x)\eta)=q'(x)-xq.\]
We call this probability space the {\em homotopy Gaussian}.
\end{definition}
\begin{remark}
The one dimensional abelian Lie group $\R$ acts on $\R$ by translations $x
\mapsto x+g$ and induces an action on the space of functions
integrable with respect to the measure $e^{-\frac{x^2}{2}}dx$
by \[f(x)
\mapsto f(x+g)\exp \left( - \frac{g^2+2xg}{2}\right )\]
which can be seen to be integration-invariant by a simple change of variables.
The induced action
of the one dimensional abelian Lie algebra $\R$ is generated by the single map
$f(x)\mapsto f'(x)-xf(x)$.  Note that the Lie algebra action can be restricted to
the subspace of polynomials.  The Chevalley-Eilenberg complex with
values in the space of polynomials is precisely the homotopy
Gaussian.  The term $\sum f_{ij}^k \eta_k \frac{\partial^2}{\partial \eta_i \partial
  \eta_j}\otimes 1$ in the differential in Equation
\eqref{CEdifferential} vanishes since here the Lie algebra is
abelian.

Also, as with any abelian Lie algebra, the higher Chevalley-Eilenberg
cohomology of the homotopy Gaussian vanishes making it an algebraic
cone on the ordinary Gaussian (the degree zero part of the homotopy Gaussian).
\end{remark}
For the remainder of this section, assume that $X$ and $Y$ are two
ordinary random variables in the homotopy Gaussian; that is, $X(1)=p(x)$ and $Y(1)=q(x)$ with all higher terms zero. We will construct an explicit homotopy between $X$ and $Y$ in the case that they have the same cumulants $\kappa_i:S^i\C\to \C$. The reader may generalize to homotopy random variables with higher terms and to collections of homotopy random variables.
\begin{lemma}\label{lemma:1}
In the homotopy Gaussian, define 
\[y_n = -\eta\sum_{j=1}^{\lfloor\frac{n}{2}\rfloor}\frac{x^{n+1-2j}(n-1)!!}{(n+1-2j)!!}.
\]
Then $dy_n = x^n-E(x^n)$. 
\end{lemma}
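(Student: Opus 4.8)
The plan is to reduce the statement to an elementary identity among polynomials in $x$ alone, and then to establish that identity by a telescoping cancellation. Writing $y_n = -\eta\, g_n(x)$ with
\[
g_n(x) = \sum_{j=1}^{\lfloor n/2\rfloor} \frac{(n-1)!!}{(n+1-2j)!!}\, x^{n+1-2j},
\]
I first apply the formula $d(p(x)+q(x)\eta) = q'(x) - x\,q(x)$ with $q = -g_n$, which gives immediately $dy_n = x\,g_n(x) - g_n'(x)$. Thus the whole lemma comes down to the polynomial identity $x g_n - g_n' = x^n - E(x^n)$, in which $\eta$ and the differential no longer appear.

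For the main computation I would differentiate $g_n$ term by term and exploit the double-factorial recursion $(n+1-2j)!! = (n+1-2j)\,(n-1-2j)!!$. This shows that the derivative of the $j$-th summand of $g_n$ has precisely the coefficient attached to the $(j+1)$-st summand, so that when one forms $x g_n - g_n'$ the two sums telescope: every interior monomial cancels against its neighbor, and only the top monomial contributed by $x g_n$ and the bottom monomial contributed by $-g_n'$ survive. The top monomial is $x^n$ with coefficient $(n-1)!!/(n-1)!! = 1$, yielding the desired leading term, so $x g_n - g_n'$ collapses to $x^n$ minus a single lower monomial.

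The delicate point, and the step I expect to be the main obstacle, is identifying that surviving bottom monomial with $E(x^n)$. This requires recalling the moments of the standard Gaussian, namely $E(x^n) = (n-1)!!$ for even $n$ and $E(x^n) = 0$ for odd $n$, and then tracking the lowest exponent $x^{\,n-2\lfloor n/2\rfloor}$ together with its coefficient, using the conventions $(-1)!! = 0!! = 1$. Here the parity of $n$ must be handled separately, since it determines whether the bottom surviving exponent is $0$ or $1$; matching the normalization of the double factorial against that of the Gaussian moments in each parity case is the crux of the argument.

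As a sanity check, and as an alternative to positing the closed form, one can instead solve the first-order linear equation $x g - g' = x^n - E(x^n)$ directly by descending induction on the degree: the leading behavior forces $g$ to begin with $x^{n-1}$, and cancelling each successive lower power of $x$ determines the remaining coefficients, recovering exactly the weights $\tfrac{(n-1)!!}{(n+1-2j)!!}$. This both motivates the stated formula for $g_n$ and furnishes an independent verification of the identity $dy_n = x^n - E(x^n)$.
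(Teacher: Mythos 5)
Your reduction to the polynomial identity $x g_n - g_n' = x^n - E(x^n)$ and the telescoping strategy are exactly the "direct computation" the paper alludes to (its proof is one sentence and gives no details), so the approach is the right one. But the one step you explicitly defer --- the parity bookkeeping at the bottom of the telescope, which you correctly call the crux --- is precisely the step that does not go through as claimed with the summation limit $\lfloor n/2\rfloor$. For even $n$ everything works: the last term of $g_n'$ is the constant $(n-1)!!/(-1)!!=(n-1)!!=E(x^n)$, and $x g_n - g_n' = x^n - (n-1)!!$ as desired (e.g.\ $n=4$: $y_4=-\eta(x^3+3x)$, $dy_4=x^4-3$). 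For odd $n$, however, the exponents $n+1-2j$ run through even values $n-1, n-3,\dots$ and with $j\le\lfloor n/2\rfloor=(n-1)/2$ they stop at $2$; the derivative of that final term leaves an uncancelled monomial $-(n-1)!!\,x$, whereas $E(x^n)=0$. Concretely, $n=3$ gives $y_3=-\eta x^2$ and $dy_3 = x^3 - 2x \neq x^3 = x^3 - E(x^3)$; and $n=1$ gives an empty sum, $y_1=0$, $dy_1=0\neq x$.

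So your claim that "$x g_n - g_n'$ collapses to $x^n$ minus a single lower monomial" identifiable with $E(x^n)$ is false in the odd case as the formula is printed; the telescope needs one more rung. The sum must extend to $j=\lceil n/2\rceil$ (equivalently $\lfloor (n+1)/2\rfloor$), which for odd $n$ appends the constant term $(n-1)!!/0!! = (n-1)!!$ to $g_n$; then $-g_n'$ kills the leftover $(n-1)!!\,x$ coming from the $x^2$ term while contributing nothing new, and the identity holds (check: $n=3$ gives $y_3=-\eta(x^2+2)$, $dy_3=x^3$). The descending induction you propose as a sanity check would have caught this immediately: solving $xg-g'=x^n$ for odd $n$ forces a nonzero constant term in $g$. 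To make the proof complete you should either carry out that induction and correct the upper limit, or restrict the stated formula to even $n$; as written, the argument asserts the crucial cancellation rather than verifying it, and the assertion is wrong for half the cases.
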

\begin{proof}
The proof is a direct computation which is inductive.
\end{proof}
Note that $y_n$ as $n\ge 1$ varies span $\eta\C[x]$, so that the collection of $dy_n$ span the image of $d$.
\begin{lemma}
Assume $X$ and $Y$ have the same cumulants $\kappa_i$ for $i\le n$. Then $\E(p^n)=\E(q^n)$.
\end{lemma}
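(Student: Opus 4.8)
The plan is to reduce the statement to the classical moment--cumulant relation, exploiting that the homotopy Gaussian is an algebraic cone on the ordinary Gaussian. First I would observe that, since $X$ and $Y$ have only linear terms $X(1)=p(x)$ and $Y(1)=q(x)$, they are exactly the collections of homotopy random variables attached to the ordinary random variables $p(x)$ and $q(x)$ in the degree-zero part of $V$, as in Remark~\ref{classicalremark}. Because the inclusion of the ordinary Gaussian into the homotopy Gaussian is an algebra map, Proposition~\ref{prop_algebramap} guarantees that cumulants computed in the cone agree with those computed downstairs; hence the components $\kappa_i$ of the joint cumulant of $X$ are precisely the classical cumulants $c_i(p(x),\dots,p(x))$ of the random variable $p(x)$, and similarly for $Y$ and $q(x)$.

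Next I would invoke the classical moment--cumulant formula, which expresses the $n$-th moment of a random variable $Z$ as a universal polynomial in its first $n$ cumulants,
\[
\E(Z^n)=\sum_{\pi}\ \prod_{B\in\pi}\kappa_{|B|},
\]
the outer sum running over all set partitions $\pi$ of $\{1,\dots,n\}$ and the inner product over the blocks $B$ of $\pi$. The crucial structural feature is that only the cumulants $\kappa_1,\dots,\kappa_n$ appear on the right-hand side, since every block of a partition of $\{1,\dots,n\}$ has size at most $n$; moreover $\kappa_n$ itself occurs (from the one-block partition), so the dependence is genuinely on all of $\kappa_1,\dots,\kappa_n$.

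With these two ingredients the conclusion is immediate: the hypothesis that $X$ and $Y$ share the cumulants $\kappa_i$ for all $i\le n$ forces the right-hand side of the moment--cumulant formula to take the same value whether assembled from the cumulants of $p(x)$ or those of $q(x)$, so $\E(p^n)=\E(q^n)$ (and in fact $\E(p^k)=\E(q^k)$ for every $k\le n$). The only delicate point --- the nearest thing to an obstacle --- is the bookkeeping of the first step: one must confirm that the $L_\infty$/coalgebra cumulants $\kappa_i$ really do coincide with the classical cumulants and that $\E(p^n)$ involves no cumulant of order exceeding $n$. Both are settled by the algebraic-cone structure together with the triangular nature of the moment--cumulant change of variables, so once the identification is in place no further computation is required.
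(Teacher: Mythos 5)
Your proof is correct, and at its core it rests on the same fact as the paper's: the $n$-th moment is a universal polynomial in the cumulants of order at most $n$. The organization, however, is genuinely different. You first descend to the ordinary Gaussian, identifying the components $\kappa_i$ of the joint cumulant of $X$ with the classical cumulants of $p(x)$ by way of the algebraic-cone structure and Proposition~\ref{prop_algebramap}, and then invoke the classical moment--cumulant partition formula. The paper never leaves the homotopy Gaussian: it uses the identity $\varphi\kappa = E\varphi$ (which \emph{is} the moment--cumulant relation in the coalgebra language), notes that $(\varphi\kappa X)_i$ for $i\le n$ depends only on the components $\kappa_j$ with $j\le n$ because the coalgebra automorphism $\varphi$ is assembled from products over partitions --- exactly the triangularity you highlight --- and evaluates $(E\varphi X)_n(1\odot\cdots\odot 1)=E(p^n)$ directly. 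The paper's version is more self-contained, needing only the definition $K=e^\varphi$ and no appeal to the cone structure or to the classical theory; yours makes the probabilistic content transparent and correctly flags the one delicate point, namely that the identification of the $L_\infty$ cumulants with the classical ones is licensed by the inclusion being an algebra map. Both arguments are valid.
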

\begin{proof}
Examine the composition $\varphi\kappa$. The hypotheses of the lemma imply that $(\varphi\kappa X)_i=(\varphi\kappa Y)_i$ for $i\le n$. Since $\varphi\kappa=E\varphi$, this implies that $(E\varphi X)_n=(E\varphi Y)_n$. These maps take $(1\odot\cdots \odot 1)$ to $E(p^n)$ and $E(q^n)$, respectively.
\end{proof}
\begin{lemma}\label{lemma:n}
Suppose $\E(r(x))=0$ in the homotopy Gaussian. Then $r(x)$ can be written uniquely as 
\[
r(x)= \sum_{i=1}^N a_i (x^i-E(x^i))\] where $N$ is the degree of $r$.
\end{lemma}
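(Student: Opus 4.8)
The plan is to treat this as a linear-algebra statement about the expectation functional restricted to polynomials, exhibiting the family $x^i - E(x^i)$ for $1 \le i \le N$ as a basis for the kernel of expectation on the space of polynomials of degree at most $N$. Let $P_N \subset \C[x]$ denote that space, which has dimension $N+1$. Since $E(1)=1$, the expectation is a surjective linear functional, so its restriction to $P_N$ has a kernel of dimension exactly $N$.

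First I would check that each $v_i := x^i - E(x^i)$ lies in this kernel: each $E(x^i)$ is a scalar, so $v_i \in P_N$ for $i \le N$, and by linearity together with the normalization $E(1)=1$ we have $E(v_i) = E(x^i) - E(x^i)E(1) = 0$. This produces $N$ elements of the kernel of $E$ restricted to $P_N$.

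Next I would verify that $v_1, \dots, v_N$ are linearly independent. Each $v_i$ differs from the monomial $x^i$ only by a constant, so the $v_i$ have pairwise distinct degrees. Reading off the coefficient of $x^N$ in a vanishing combination $\sum_{i=1}^N c_i v_i = 0$ forces $c_N = 0$, then the coefficient of $x^{N-1}$ forces $c_{N-1}=0$, and descending inductively gives $c_i = 0$ for all $i$. Hence $\{v_i\}_{i=1}^N$ is a basis of the $N$-dimensional kernel of $E|_{P_N}$.

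Finally, since $r$ has degree exactly $N$ and satisfies $E(r)=0$, it lies in this kernel, so the basis property yields a unique expansion $r = \sum_{i=1}^N a_i v_i$; comparing leading coefficients identifies $a_N$ with the (nonzero) leading coefficient of $r$, consistent with $N$ being the degree of $r$. I expect no genuine obstacle here: the only care required is the degree bookkeeping that secures independence and pins the sum to run exactly up to $N$. It is worth noting that this basis is the same family appearing just after Lemma \ref{lemma:1}, where the $v_i = dy_i$ were shown to span the image of $d$; thus the lemma records that the kernel of expectation on polynomials coincides with the image of the differential, now equipped with a unique expansion.
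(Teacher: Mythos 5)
Your proof is correct, but it takes a genuinely different route from the paper's. The paper argues homologically: $r(x)$ is a degree-zero element, hence $d$-closed, and since the expectation of the homotopy Gaussian is a quasi-isomorphism (established earlier via the vanishing of higher Chevalley--Eilenberg cohomology for an abelian Lie algebra), the condition $\E(r)=0$ forces $r$ to be exact; combined with the observation after Lemma~\ref{lemma:1} that the elements $dy_i = x^i - \E(x^i)$ span the image of $d$, this yields the expansion. Your argument instead is pure linear algebra: a dimension count on the kernel of $\E$ restricted to polynomials of degree at most $N$, together with the triangularity of the family $v_i = x^i - \E(x^i)$, shows this family is a basis of that kernel. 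What your approach buys: it is completely self-contained, using only linearity and the normalization $\E(1)=1$, with no appeal to the quasi-isomorphism property; and it makes the uniqueness claim explicit, which the paper's two-line proof leaves implicit (there it rests on the injectivity of $d$ on $\eta\C[x]$, equivalently the independence of the $dy_i$). Indeed, your argument combined with Lemma~\ref{lemma:1} actually reproves that $\E$ induces an isomorphism on degree-zero homology, rather than assuming it. What the paper's approach buys: brevity, and a formulation that transfers to any contractible probability space --- closed elements of vanishing expectation are exact --- without needing an explicit degree filtration on the random variables. Your closing remark correctly identifies the bridge between the two proofs: the kernel of expectation on polynomials coincides with the image of the differential.
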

\begin{proof}
The element $r(x)$ is closed under $d$. Since $\E$ is a quasi-isomorphism, $r(x)$ must be exact.
\end{proof}
This motivates the following definition.
\begin{definition}
Let $\E(r(x))=0$. Then $h(r)$ is defined as follows:
\[
h(r)=\sum_{i=1}^N a_i y_i
\]
so that $d(h(r))=r(x)$.
\end{definition}
\begin{definition}
Define an $L_\infty$ homotopy $\Lambda$ from $(\C,0)$ to $(V,d)$ as follows:
\[
\Lambda_n(1\odot\cdots\odot 1)=p^n + t(q^n-p^n) + h(p^n-q^n)dt
\]
The components are evidently closed and because there are no higher brackets in $(V,d)$ this is an $L_\infty$ homotopy. Evaluation reveals that it is an $L_\infty$ homotopy between $\varphi X$ and $\varphi Y$. We will shorten $\Lambda_n(1\odot\cdots\odot 1)$ to $\Lambda_n$.
\end{definition}
To get an $L_\infty$ homotopy between $X$ and $Y$ it is now only necessary to compose with $\varphi^{-1}$.
\begin{construction}
Define the following collection of linear maps from $S^n\C$ to $V$.
\[
H_n(1\odot\cdots \odot 1)=\sum_{k=1}^n\sum_{P_k(n)}(-1)^{k-1}(k-1)!\Lambda_{p_1}\cdots \Lambda_{p_k}
\]
Here $P_k(n)$ denotes partitions of $n$ into $k$ parts of size $p_1,\ldots, p_k$.
\end{construction}
\begin{proposition}
The previous construction is an $L_\infty$ homotopy from $(\C,0)$ to $(V,d^\varphi)$ between $X$ and $Y$.
\end{proposition}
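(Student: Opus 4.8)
The plan is to recognize the collection $\{H_n\}$ as the corestriction of the transported morphism $\varphi^{-1}\Lambda$, and then to deduce the proposition from the fact that transport by $\varphi^{-1}$ turns the $L_\infty$ homotopy $\Lambda$ for $(V,d)$ into an $L_\infty$ homotopy for $(V,d^\varphi)$. Concretely, there are two things to verify: first, that $\varphi^{-1}\Lambda$ really is an $L_\infty$ homotopy from $(\C,0)$ to $(V,d^\varphi)$ running from $X$ to $Y$; and second, that the explicit partition formula defining $H_n$ computes the components of $\varphi^{-1}\Lambda$ on the generators $1\odot\cdots\odot 1$.

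The first point is formal. By construction $\Lambda$ is a coalgebra map $S\C\to SV\otimes\C[t,dt]$ with $d\Lambda=0$, $\Lambda(0)=\varphi X$, and $\Lambda(1)=\varphi Y$. Since $d^\varphi=\varphi^{-1}d\varphi$, the composite $\varphi^{-1}\Lambda$ is again a coalgebra map and satisfies $d^\varphi(\varphi^{-1}\Lambda)=\varphi^{-1}d\Lambda=0$, so it is an $L_\infty$ morphism from $(\C,0)$ to $(V,d^\varphi)$ valued in $\C[t,dt]$, that is, an $L_\infty$ homotopy. Because $\varphi$ acts only on the $V$-factor and is $\C[t,dt]$-linear, evaluation at $t=0$ and $t=1$ commutes with $\varphi^{-1}$; hence $(\varphi^{-1}\Lambda)(0)=\varphi^{-1}\varphi X=X$ and $(\varphi^{-1}\Lambda)(1)=\varphi^{-1}\varphi Y=Y$. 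Thus $\varphi^{-1}\Lambda$ is the desired homotopy, and it only remains to identify it with $H$.

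A coalgebra map out of $S\C$ is determined by its corestriction to $V\otimes\C[t,dt]$, so I would compare first components on $1^{\odot n}$. On one hand, the corestriction of $\varphi^{-1}$ restricted to the $k$-th symmetric power is simply $(-1)^{k-1}(k-1)!$ times the $k$-fold product; this follows by solving the recursion coming from $\varphi\varphi^{-1}=\id$, using that the corestriction of $\varphi$ on $S^m$ is the $m$-fold product. On the other hand, the expansion of the coalgebra map $\Lambda$ over set partitions gives $\Lambda(1^{\odot n})=\sum_{\pi}\bigodot_{B\in\pi}\Lambda_{|B|}$, the sum over set partitions $\pi$ of an $n$-element set with $\Lambda_{|B|}=\Lambda_{|B|}(1^{\odot|B|})$. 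Applying the corestriction of $\varphi^{-1}$ multiplies the term indexed by a $k$-block partition by $(-1)^{k-1}(k-1)!$ and takes the product of the $\Lambda_{|B|}$ in $V\otimes\C[t,dt]$, yielding exactly $\sum_{k}\sum_{P_k(n)}(-1)^{k-1}(k-1)!\,\Lambda_{p_1}\cdots\Lambda_{p_k}=H_n$.

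The main obstacle is the bookkeeping in this last identification: I must check that the sum indexed by $P_k(n)$, together with the coefficient $(-1)^{k-1}(k-1)!$, reproduces the set-partition sum obtained from $\varphi^{-1}\Lambda$, so that the multiplicities of each block-size type are accounted for correctly, and that the corestriction coefficients of $\varphi^{-1}$ are indeed $(-1)^{k-1}(k-1)!$ and not some other solution of the inversion recursion. Both are elementary but must be carried out carefully; everything else---closedness, the endpoint values, and the coalgebra-map property---is immediate from transport and from the already-established properties of $\Lambda$.
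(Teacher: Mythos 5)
Your proposal is correct and takes essentially the same route as the paper: the paper's proof is just the one-line remark that the lemmas establishing $\Lambda$ together with ``a quick computation of $\varphi^{-1}$'' give the result, and your argument is precisely that computation carried out --- identifying $H$ with $\varphi^{-1}\Lambda$, checking that transport preserves closedness and the endpoint values, and verifying that the corestriction coefficients of $\varphi^{-1}$ are $(-1)^{k-1}(k-1)!$ via the inversion recursion. The only point you rightly flag, matching the $P_k(n)$-indexed sum with the set-partition expansion of the coalgebra map, is exactly the bookkeeping the paper leaves implicit.
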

\begin{proof}
Lemmas \ref{lemma:1} through \ref{lemma:n}, along with a quick computation of $\varphi^{-1}$, yield the result.
\end{proof}
\begin{remark}
If two collections of homotopy random variables are homotopic, that means that those collections are indistinguishable by cumulants. However, this does not mean that the collections will remain indistinguishable if they are extended to larger collections of homotopy random variables. For instance, in the one variable Gaussian, we can define homotopy random variables $X$ and $Y$ where $X(1)=x$ and $Y(1)=-x$ with all higher maps zero. 
These homotopy random variables are homotopic and indistinguishable. 

Define collections of two homotopy random variables $\overline{X}$ and $\overline{Y}$ where 
\begin{align*}
\overline{X}((1,0))&=x& \overline{Y}((1,0))&=-x
\\
\overline{X}((0,1))&=1& \overline{Y}((0,1))&=1
\end{align*}with all higher maps zero. Then $\overline{X}$ and $\overline{Y}$ are not homotopic. Indeed, the second cumulant distinguishes them.
\end{remark}

\bibliographystyle{plain} 
\bibliography{HPT-ref.bib}

\end{document}